\newtheorem{satz}{Theorem}
\newtheorem{proposition}[satz]{Proposition}
\newtheorem{theorem}[satz]{Theorem}
\newtheorem{lemma}[satz]{Lemma}
\def\Z{\mathbb {Z}}
\def\F{\mathbb {F}}
\def\E{\mathsf{E}}
\def\a{\alpha}
\def\o{\omega}
\def\({\big (}
\def\){\big )}
\def\dim{{\rm dim}}
\def\le{\leqslant}
\def\ge{\geqslant}
\def\_phi{\varphi}
\def\eps{\varepsilon}
\def\Gr{{\mathbf G}}
\def\la{\lambda}
\def\F{\mathbb {F}}
\def\bp{\bigskip}
\author{Shkredov I.D.}
\title{On common energies and sumsets  II 
}
\date{}
\begin{document}
	\maketitle

\begin{center}
	Annotation.
\end{center}

{\it \small
    We continue to study the relationship  between the size of the sum of a set and the common energy of its subsets. 
    We find a rather sharp subexponential dependence between the doubling constant of a set $A$ and the minimal common energy taken over all  partitions  of $A$ into two disjoint subsets. 
    As an application, we give a proof of the well--known arithmetic regularity lemma with better dependence  on parameters. 
}
\\

\section{Introduction}
\label{sec:intr}

Given an abelian group $\Gr$ and two sets $A,B\subseteq \Gr$, define  
the {\it sumset} of $A$ and $B$ as 
\begin{equation}\label{def:A+B_intr}
    A+B:=\{a+b ~:~ a\in{A},\,b\in{B}\}\,.
\end{equation} 
The study of the structure of sumsets is a fundamental  problem in  classical additive combinatorics see, e.g., \cite{Freiman_book}, \cite{TV}, \cite{GGMT_Marton_bounded} and other papers.
Another important combinatorial concept closely related to the sumset $A+A$
 is the {\it additive energy} $\E(A,A)$ or, more generally, the {\it common additive energy} $\E(A,B)$ of $A$ and $B$, which is defined as
\begin{equation}\label{def:common_energy_intr}
 \E^{} (A,B) = |\{ (a_1,a_2,b_1,b_2) \in A\times A \times B \times B ~:~ a_1 - b^{}_1 = a_2 - b^{}_2 \}| \,.
\end{equation}
The additive energy and the size of the sumset are trivially related  via 
the Cauchy--Schwarz inequality, namely, 
\begin{equation}\label{f:energy_CS}
    \E^{} (A,B) |A \pm B| \ge |A|^2 |B|^2 \,,
\end{equation}
but a much deeper connection is given to us by the famous Balog--Szemer\'edi--Gowers theorem (see \cite{BSz_statistical} and \cite{Gowers_4}). 

\begin{theorem}
    Let $\Gr$ be an abelian group, $A\subseteq \Gr$ be a set and $K\ge 1$ be a real number. 
    Then 
\begin{equation}\label{f:BSzG_intr}
    \E(A,A) \gg \frac{|A|^3}{K^{C_1}} \quad \quad 
    \mbox{ iff }
        \quad \quad 
    \exists A' \subseteq A ~:~ |A'+A'| \ll K^{C_2} |A| 
    \quad 
    \mbox{ and }
    \quad
    |A'| \gg \frac{|A|}{K^{C_2}} \,.
\end{equation}
    Here $C_1 = O(C_2)$ and $C_2 = O(C_1)$. 
\label{t:BSzG_intr}
\end{theorem}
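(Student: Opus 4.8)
The plan is to establish the two implications separately. The implication $\Leftarrow$ is elementary: if $A'\subseteq A$ satisfies $|A'+A'|\ll K^{C_2}|A|$ and $|A'|\gg |A|/K^{C_2}$, then applying \eqref{f:energy_CS} to the pair $(A',A')$ gives $\E(A',A')\ge |A'|^4/|A'+A'|\gg |A|^3/K^{5C_2}$; since $A'\subseteq A$ forces $\E(A,A)\ge \E(A',A')$ (every additive quadruple living in $A'$ also lives in $A$), we get $\E(A,A)\gg |A|^3/K^{C_1}$ with $C_1=5C_2=O(C_2)$.

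For the implication $\Rightarrow$, which is the substantive direction, I would run the classical Balog--Szemer\'edi--Gowers argument through its graph-theoretic core. First, restrict to popular differences: writing $r_{A-A}(x)$ for the number of representations $x=a_1-a_2$ with $a_1,a_2\in A$ and putting $\tau:=\E(A,A)/(2|A|^2)$, the differences $x$ with $r_{A-A}(x)<\tau$ contribute at most $\tau\sum_x r_{A-A}(x)=\E(A,A)/2$ to the energy, so the popular set $P:=\{x:r_{A-A}(x)\ge\tau\}$ carries at least half of $\E(A,A)$. Form the bipartite graph $\Gamma$ on $A\sqcup A$ with $a\sim b$ iff $a-b\in P$; then $e(\Gamma)=\sum_{x\in P}r_{A-A}(x)\ge |A|^{-1}\sum_{x\in P}r_{A-A}(x)^2\ge \E(A,A)/(2|A|)\gg |A|^2/K^{C_1}$, i.e. $\Gamma$ has density $\gg K^{-C_1}$. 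Next apply the graph form of Balog--Szemer\'edi--Gowers (provable by dependent random choice, or by Gowers's iterated pigeonholing argument): there exist $A_1,A_2\subseteq A$ with $|A_1|,|A_2|\gg |A|K^{-O(C_1)}$ such that every pair $(a,b)\in A_1\times A_2$ is joined in $\Gamma$ by $\gg |A|^2 K^{-O(C_1)}$ paths of length three.

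The heart of the matter is to convert the abundance of length-three paths into an honest bound $|A_1-A_2|\ll K^{O(C_1)}|A|$. A path $a\to b'\to a'\to b$ (with $a\in A_1$, $b\in A_2$, $b',a'\in A$, and all three edges in $\Gamma$) records the identity $a-b=(a-b')-(a'-b')+(a'-b)$, a signed sum of three popular differences, each of which in turn admits at least $\tau\gg |A|K^{-C_1}$ representations as a difference of two elements of $A$. Expanding those representations and counting, for each $v\in A_1-A_2$, the tuples in a bounded power of $A$ that witness $v$ in this way, one obtains a lower bound of order $|A_1|\,|A_2|\,|A|^{O(1)}K^{-O(C_1)}$ for the total number of such tuples; a Cauchy--Schwarz step against the trivial upper bound for that number then yields $|A_1-A_2|\ll K^{O(C_1)}|A|$. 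This is where I expect the real work to lie: the naive inclusion $A_1-A_2\subseteq (A-A)-(A-A)+(A-A)$ only gives $|A_1-A_2|\le|A-A|^3$, which is useless, so one genuinely has to exploit the multiplicity $\tau$ of the popular differences rather than just their support.

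Finally, symmetrize. From $|A_1|,|A_2|\gg |A|K^{-O(C_1)}$ and $|A_1-A_2|\ll K^{O(C_1)}|A|$, the Ruzsa triangle inequality gives $|A_1-A_1|\le |A_1-A_2|\,|A_2-A_1|/|A_2|\ll K^{O(C_1)}|A|$, so $A_1$ has doubling $|A_1-A_1|/|A_1|\ll K^{O(C_1)}$; the Pl\"unnecke--Ruzsa inequality then upgrades this to $|A_1+A_1|\ll K^{O(C_1)}|A_1|\ll K^{O(C_1)}|A|$, and taking $A':=A_1$ finishes the direction with $C_2=O(C_1)$.
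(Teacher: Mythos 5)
The paper does not actually prove Theorem \ref{t:BSzG_intr}: it is quoted as known background, with references to Balog--Szemer\'edi and Gowers, so there is no in-paper argument to compare against. Your sketch is the standard Gowers-style proof of that cited result, and it is correct in outline: the backward direction via \eqref{f:energy_CS} and monotonicity of energy under passing to subsets is complete as written; the forward direction correctly passes to popular differences, builds the bipartite graph, invokes the graph-theoretic Balog--Szemer\'edi--Gowers lemma (paths of length three via dependent random choice), converts paths into a bound on $|A_1-A_2|$ by exploiting the multiplicity $\tau$ of each popular difference, and symmetrizes with Ruzsa's triangle inequality and Pl\"unnecke--Ruzsa. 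Citing the graph lemma as a black box is reasonable here, since it is a standard self-contained combinatorial statement.

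One small refinement at the step you flag as ``the real work'': the cleanest bookkeeping is per difference rather than a total count followed by Cauchy--Schwarz. For each $v\in A_1-A_2$, fix one pair $(a,b)\in A_1\times A_2$ with $a-b=v$; the $\gg |A|^2K^{-O(C_1)}$ paths $a\to b'\to a'\to b$, together with the $\ge\tau\gg |A|K^{-C_1}$ representations of each of the three popular differences, give $\gg |A|^5K^{-O(C_1)}$ sextuples $(c_1,d_1,c_2,d_2,c_3,d_3)\in A^6$ with $(c_1-d_1)-(c_2-d_2)+(c_3-d_3)=v$. Since the total number of sextuples is $|A|^6$ and distinct $v$ have disjoint witness sets, this immediately yields $|A_1-A_2|\ll K^{O(C_1)}|A|$, which is exactly what your symmetrization step needs; your formulation in terms of the total number of witnessing tuples summed over pairs $(a,b)$ does not by itself separate distinct differences, so it should be replaced by (or reduced to) this per-$v$ count.
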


    In recent paper \cite{sh_common_energy} the author obtained another polynomial relation between the additive energy and $|A+A|$.



\begin{theorem}
    Let $\Gr$ be an abelian group, $A\subseteq \Gr$ be a set and $K\ge 1$ be a real number.
    Then 
\begin{equation}\label{eq:intr}
    |A+A| \ll K^{C_1} |A| 
        \quad \quad 
    \mbox{ iff }
        \quad \quad 
    \forall X,Y \subseteq A,\, 
    |X|\ge |A|/2 ~:~ \E(X,Y) \ge 
    \frac{|X| |Y|^2}{K^{C_2}} 
    \,. 
\end{equation}
     Here, as above, $C_1 = O(C_2)$ and $C_2 = O(C_1)$.
\label{t:sh_pre_intr}
\end{theorem}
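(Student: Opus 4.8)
The plan is to prove the two implications of \eqref{eq:intr} separately, since they are of quite different character.

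The implication ``small doubling $\Rightarrow$ energy bound'' is short. Assume $|A+A|\le K^{C_1}|A|$. By the Pl\"unnecke--Ruzsa inequality, $|A-A|\le|A+A|^2/|A|\le K^{2C_1}|A|$. For any $X,Y\subseteq A$ with $|X|\ge|A|/2$ we have $X-Y\subseteq A-A$, hence $|X-Y|\le K^{2C_1}|A|\le 2K^{2C_1}|X|$, and \eqref{f:energy_CS} gives $\E(X,Y)\ge|X|^2|Y|^2/|X-Y|\ge|X||Y|^2/(2K^{2C_1})$. So the right-hand side of \eqref{eq:intr} holds with $C_2=2C_1+O(1)=O(C_1)$.

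For the converse I would assume the energy condition with exponent $C_2$ and derive $|A+A|\le K^{O(C_2)}|A|$; by Pl\"unnecke--Ruzsa it is enough to bound $|A-A|$. The case $X=Y=A$ gives $\E(A)=\E(A,A)\ge|A|^3/K^{C_2}$, so the Balog--Szemer\'edi--Gowers theorem (Theorem~\ref{t:BSzG_intr}) yields $A_0\subseteq A$ with $|A_0|\gg|A|/K^{O(C_2)}$ and $|A_0-A_0|\ll K^{O(C_2)}|A|$. One can then feed $A_0$ back: taking $X=A$, $Y=A_0$ the hypothesis reads $\E(A,A_0)\ge|A||A_0|^2/K^{C_2}$, whereas $\E(A,A_0)=\sum_d r_{A-A}(d)\,r_{A_0-A_0}(d)\le|A_0|\sum_{h\in A_0-A_0}r_{A-A}(h)$; combining these shows that a $K^{-O(C_2)}$-fraction of the $|A|^2$ difference pairs of $A$ falls inside the small structured set $A_0-A_0$, so by averaging some translate $t+(A_0-A_0)$ meets $A$ in $\gg|A|/K^{O(C_2)}$ points.

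The crux — and the step I expect to be genuinely hard — is upgrading ``$A$ has a positive-density structured subset'' to ``$A$ itself has small doubling''. A single Balog--Szemer\'edi--Gowers extraction cannot do this: for $A=P\cup Q$ with $P$ an arithmetic progression of length $|A|/2$ and $Q$ a generic dilate of $P$ one has $\E(A)\gg|A|^3$ but doubling $\asymp|A|$, so a global argument exploiting the \emph{uniformity} of the hypothesis — that $\E(X,Y)$ is large for \emph{every} $X$ with $|X|\ge|A|/2$ and \emph{every} $Y$ — must be used. (That very example is ruled out by the hypothesis through the partition $X=P$, $Y=Q$, for which $\E(X,Y)=O(|X||Y|)$ lies far below $|X||Y|^2/K^{C_2}$ once $|A|\gg K^{C_2}$.) The plan is an iterative covering argument: repeatedly extract a structured piece from the still-uncovered part of $A$ — legitimate while that part has size $\ge|A|/2$, since the hypothesis then applies to it as $X$ — producing pieces $A_0,\dots,A_t$ with $t\le K^{O(C_2)}$ and $|A_0\cup\dots\cup A_t|\ge|A|/2$; then apply the hypothesis to pairs assembled from these pieces (after padding up to size $\ge|A|/2$, and to balanced partitions $A=X\sqcup Y$ refining the current cover) to force mutual compatibility $|A_i-A_j|\ll K^{O(C_2)}|A|$; then Ruzsa's triangle inequality and Pl\"unnecke--Ruzsa glue the pieces into a subset $A^\ast\subseteq A$ with $|A^\ast|\ge|A|/2$ and $|A^\ast-A^\ast|\ll K^{O(C_2)}|A|$; finally one more application of the hypothesis (with $X=A^\ast$ and $Y$ running over $A$), together with a covering step, should absorb the remaining $\le|A|/2$ points of $A$. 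The delicate point throughout is the compatibility/gluing step: it is precisely there that the strength of the hypothesis over \emph{all} large $X$ and \emph{all} $Y$ — in particular over complements of partial covers and over balanced partitions — must be spent, and where the ``generic dilate'' obstruction is actually overcome; making the bookkeeping close with $C_1=O(C_2)$ is the real work.
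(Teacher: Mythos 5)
Note first that the paper does not prove Theorem \ref{t:sh_pre_intr} at all: it is quoted from \cite{sh_common_energy}, so there is no in-paper proof to compare with and your attempt has to be judged on its own. Your forward implication is fine: from $|A+A|\ll K^{C_1}|A|$, Pl\"unnecke--Ruzsa bounds $|A-A|$, $X-Y\subseteq A-A$ with $|X|\ge |A|/2$, and \eqref{f:energy_CS} give $\E(X,Y)\gg |X||Y|^2/K^{2C_1}$, i.e.\ $C_2=2C_1+O(1)$.

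The converse, which is the whole content of the theorem, is not proved: you give a plan whose two decisive steps are left open, and one of them is defective as described. (i) The ``compatibility'' step: the BSG pieces $A_i$ have size only $|A|/K^{O(C_2)}$, so the hypothesis cannot be applied with $X=A_i$; your remedy is to ``pad $A_i$ up to size $\ge|A|/2$'', but a lower bound on $\E(X,A_j)$ for a padded set $X\supseteq A_i$ gives no information about $\E(A_i,A_j)$ --- all the energy may be carried by the padding --- so the asserted bound $|A_i-A_j|\ll K^{O(C_2)}|A|$, on which the Ruzsa-triangle gluing rests, does not follow from what you wrote. (ii) The final ``absorption'': applying the hypothesis once with $X=A^\ast$, $Y=A$ only shows that a $K^{-O(C_2)}$-fraction of $A$ meets few translates of $A^\ast$; to capture all of $A$ one must iterate, taking $Y$ to be the not-yet-covered remainder at each stage (this is precisely where the quantifier ``for all $Y$'' is spent; compare Lemma \ref{l:E(A,B)_Schoen} and the covering iteration in the proof of Theorem \ref{t:main} of the present paper) and one must control the total number of translates by $K^{O(C_2)}$. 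You explicitly defer both the compatibility bookkeeping and the polynomial closure (``the real work''), but that deferred part is exactly the theorem; as it stands, only the easy implication has been established.
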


Therefore it is possible to express the doubling constant of $A$ in terms of the common additive energy  of subsets of $A$. 

In this paper we consider a weaker condition than \eqref{eq:intr}, namely, instead of taking all pairs $X,Y \subseteq A$, $|X|\ge |A|/2$ we have deal with $X,Y \subseteq A$ such that $X \cup Y = A$ and the union is disjoint (then, trivially, either $|X|$ or $|Y|$ is at least $|A|/2$). 
These kinds of restrictions were considered in excellent paper \cite{ER_connectedness} (at the graph level), although the common energy $\E(X,Y)$ is a nonlinear and more subtle quantity than the usual graph density. 
It turns out that in the case of disjoint unions we do {\it not} have polynomial dependence between this new quantity (the rigorous definition can be found in Section \ref{sec:def}) and 
the {\it doubling constant} $\mathcal{D}[A]:= |A+A|/|A|$ of $A$. 
More precisely (see Theorems \ref{t:main}, \ref{t:constr} below), we obtain

\begin{theorem}
    Let $\Gr$ be an abelian group, $A\subseteq \Gr$ be a set.
    Suppose that for any $X,Y \subseteq A$ such that $X \cup Y = A$ and $X\cap Y = \emptyset$ one has 
\begin{equation}\label{f:main_intr_1}
    \E (X,Y) \ge \frac{|X|^2 |Y|^2}{K |A|} \,.
\end{equation}
    Then 
\begin{equation}\label{f:main_intr_2}
    \frac{|A+A|}{|A|} \le \exp (O(K^{1/3} \log^2 K)) \,.
\end{equation}
    On the other hand, there is an abelian group $\Gr$ and a set $A\subseteq \Gr$ such that \eqref{f:main_intr_1} takes place 
    for all $X,Y \subseteq A$ such that $X \cup Y = A$ and $X\cap Y = \emptyset$
    but 
\begin{equation}\label{f:main_intr_3}
    \frac{|A+A|}{|A|} \ge  \exp ( \Omega (K^{1/4} \log K)) \,.
\end{equation}
\label{t:main_intr}
\end{theorem}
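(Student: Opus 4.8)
The statement has two independent halves: an upper bound on $\mathcal D[A]$ forced by the partition hypothesis \eqref{f:main_intr_1}, and a construction witnessing that the exponent in the exponent cannot be improved much. I would treat them separately, and I expect the construction to be the more delicate of the two. The analogy with \cite{ER_connectedness} suggests that the upper bound should be an ``energy-weighted'' version of the graph statement that every dense-cut graph is covered by few dense pieces, the loss being subexponential rather than polynomial because $\E(X,Y)$ is a quadratic (not linear) functional of the cut.

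\textbf{The upper bound $\mathcal D[A]\le\exp(O(K^{1/3}\log^2K))$.} First, averaging $\E(X,Y)$ over a uniformly random partition $A=X\sqcup Y$ gives $\mathbb E\,\E(X,Y)=\tfrac1{16}\E(A,A)+O(|A|^2)$, and such a partition is balanced with high probability; hence \eqref{f:main_intr_1}, applied to the balanced partitions it produces, already forces $\E(A,A)\gg|A|^3/K$ whenever $|A|\gg K$. Feeding this into the Balog--Szemer\'edi--Gowers theorem (Theorem~\ref{t:BSzG_intr}) yields $S\subseteq A$ with $|S|\gg|A|/K^{O(1)}$ and $\mathcal D[S]\ll K^{O(1)}$. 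The plan is then to iterate: peel $S$ off, note that the remainder still obeys a quantitatively controlled form of \eqref{f:main_intr_1} (obtained by testing against $A$ the partitions $X\sqcup(Y\cup(A\setminus A_i))$ and using Theorem~\ref{t:sh_pre_intr} to convert the resulting one--sided energy estimates back into the needed shape), and repeat. If $A$ ends up covered by $t$ translates of sets of doubling $K^{O(1)}$, the Pl\"unnecke--Ruzsa inequalities on the union give $\mathcal D[A]\le t^2K^{O(1)}$, so the whole problem reduces to bounding $t$. Proving $\log t=O(K^{1/3}\log^2K)$ — rather than the $O(\log K)$ or $O(K\log K)$ a cruder argument gives — is the crux: it should come from a potential/budget argument in which the cubic shape $|X|^2|Y|^2/|A|$ of the threshold forces each extracted piece to consume a $\sim K^{2/3}$-sized portion of a budget of total size $\sim K$ while adding only $O(\log K)$ to $\log\mathcal D[A]$. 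I expect the two genuine obstacles to be (i) ensuring that the residual sets retain enough of the hypothesis to keep the iteration going, which likely requires a weighted or spectral reformulation of \eqref{f:main_intr_1}, and (ii) pinning the exponent down to $1/3$, which is exactly where the precise arithmetic of the threshold must be exploited.

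\textbf{The construction with $\mathcal D[A]\ge\exp(\Omega(K^{1/4}\log K))$.} I would look for a hierarchical set built over $r\asymp K^{1/4}$ scales, with $s\asymp K^{1/4}$ ``blocks'' per scale placed generically and well separated in successively coarser scales, so that the doubling is multiplied by roughly $s$ at each scale; then $\mathcal D[A]\ge(s/2)^r=\exp(\Omega(K^{1/4}\log K))$ with $|A|\asymp s^{\,s}$. The point requiring care is the energy condition: a plain ``union of $s$ translated copies of the previous level'' fails, because the partition that uses the \emph{same} near-worst partition inside every copy makes the partition parameter grow \emph{multiplicatively}, $K_\ell\approx(s/2)K_{\ell-1}$, which only reproduces a polynomial relation $\mathcal D\asymp K^{O(1)}$. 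So the blocks at each scale must be arranged — along several independent coordinates at once, or with the scales additively entangled — so that no single partition can be cheap at all scales simultaneously; the target is to make the recursion close with $K_\ell\le K_{\ell-1}+\mathrm{poly}(s)$, whence $K\le r\cdot\mathrm{poly}(s)=O(K)$ for the stated parameters. Verifying \eqref{f:main_intr_1} then amounts to: for any partition $A_\ell=X\sqcup Y$, split $X$ and $Y$ along the blocks; the within-block energies are controlled inductively, the between-block cross energies are bounded below by Cauchy--Schwarz in the block sizes $|X_j|,|Y_j|$, and one checks these combine to $\E(X,Y)\ge|X|^2|Y|^2/(K_\ell|A_\ell|)$ with only additive growth of $K_\ell$. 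I expect the real difficulty to be precisely this design of the cross-scale interaction so that the adversary cannot concentrate all the ``sparse cut'' at one scale; and the balance among the three free parameters (number of scales, blocks per scale, base geometry) is what forces the exponent $1/4$, leaving the gap with the $1/3$ of the upper bound as an honest limitation of the approach.
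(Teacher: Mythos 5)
Your text is a research plan rather than a proof: at both of the decisive points you explicitly defer the argument ("obstacles (i) and (ii)", "the real difficulty"), and those are precisely the places where the paper's content lies. For the upper bound, your peel-and-recurse scheme has a structural problem you half-acknowledge: hypothesis \eqref{f:main_intr_1} speaks only about partitions of $A$ itself, so after removing the Balog--Szemer\'edi--Gowers piece the remainder inherits nothing, and your proposed repair (testing the partitions $X\sqcup(Y\cup(A\setminus A_i))$ and invoking Theorem \ref{t:sh_pre_intr}, which concerns the different quantity $\mathcal{E}_2$) is not carried out and is not obviously salvageable; likewise $\mathcal{D}[A]\le t^2K^{O(1)}$ requires the $t$ pieces to be translates of a single small-doubling set, which successive BSG applications do not give. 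The paper avoids recursion entirely: it applies BSG once to get $A'\subseteq A$ with $|A'|\gg|A|/M$ and $|A'-A'|\ll M^4|A'|$, and then at every step partitions the \emph{full} set $A$ as $Y(j)\sqcup Z(j)$ with $A'\subseteq Y(j)$, so the hypothesis applies verbatim. The exponent $1/3$ does not come from a budget heuristic but from a concrete dichotomy you do not have: writing $\E(Y_*,Z)=|Y_*|^2|Z|/T_*$, either $T_*\ge R:=[M^{1/3}]$, and then $|Y_*|\gtrsim\sqrt{T_*/M}\,|Y|$ so $Y$ grows multiplicatively, or $T_*<R$, and then Lemma \ref{l:E(A,B)_Schoen} yields a set $X$ of size $O(M^2)$ with $|Z\cap(Y_*+X)|\ge|Z|/(4T_*)$, so $Z$ shrinks by a $\Omega(1/R)$ fraction while $Y(j)$ stays inside $A'+X_1+\cdots+X_j$ with each $|X_i|\ll M^3$; balancing the two cases gives $O(M^{1/3}\log M)$ steps and a polynomial factor per step, hence $\exp(O(M^{1/3}\log^2M))$. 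Moreover the process must stop once $|Z|<\eps|A|$ with $\eps\sim M^{-1}$, and upgrading "a $(1-\eps)$-proportion $\mathcal{A}$ of $A$ has small doubling" to a bound on $\mathcal{D}[A]$ requires Proposition \ref{p:A'_large} (a covering-lemma argument that uses the $\mathcal{E}_*$ hypothesis a second time); this endgame is absent from your sketch.

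For the construction your parameter guesses are on target ($\asymp K^{1/4}$ scales, $\asymp K^{1/4}$ blocks per scale, $|A|\approx s^{s}$, doubling $\exp(\Omega(K^{1/4}\log K))$), and you correctly diagnose why a plain self-similar tower fails, but the actual set and the verification of \eqref{f:main_intr_1} --- which are the theorem --- are left open. In the paper (Theorem \ref{t:constr}) one takes $A=\bigsqcup_{j=1}^{k}A_j\subseteq\F_p^n$ with $A_j=L_j\dotplus(H_2^*\dotplus\cdots\dotplus H_j^*)$, where $|H_i|=M+1$, the $L_j$ are nested subspaces shrinking by a factor $M$ per level, and $k\sim M$; then $|A_1+A_k|\ge|L_1||H_2^*+\cdots+H_k^*|$ forces $\mathcal{D}[A]=\exp(\Omega_p(M\log M))$. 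The energy bound is not proved by an additive recursion $K_\ell\le K_{\ell-1}+\mathrm{poly}(s)$ across scales, but by a cut analysis: for any partition $S\sqcup T=A$, after possibly swapping, take the maximal $j$ with $|S\cap A_i|\ge|A_i|/2$ for all $i\le j$; then either $T$ holds half of $A_{j+1}$ or $j=k$, and a fiber-wise Cauchy--Schwarz over the $\Lambda$-cosets of $L_j$ shows that adjacent levels (or a level against itself) have large mutual energy, yielding $\E(S,T)\gg_p|S|^2|T|^2/(M^4|A|)$, i.e.\ $K\sim M^4$ while the doubling is $\exp(\Omega(M\log M))=\exp(\Omega(K^{1/4}\log K))$. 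Designing this cross-scale entanglement and running this verification is exactly the piece you flag as missing, so the proposal does not establish either half of the statement.
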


If $|A+A| = \mathcal{D}[A] \cdot |A|$, then the Cauchy--Schwarz inequality \eqref{f:energy_CS} gives us $\E(X,Y) \ge \frac{|X|^2 |Y|^2}{\mathcal{D}[A] |A|}$ for {\it any} $X,Y \subseteq A$ and hence  estimates \eqref{f:main_intr_2}, \eqref{f:main_intr_3} tell us that the dependence between the quantity $\mathcal{D}[A]$ and $K$ in \eqref{f:main_intr_1} 
is subexponential.

As an application of Theorem \ref{t:main_intr} we obtain a 
direct proof 
of the well--known arithmetic regularity lemma of Green and Sisask \cite[Proposition 3.2]{GS_max_AP3} with better 
dependence  
on parameters, see Section \ref{sec:application}. 
Nowadays, 
the arithmetic regularity lemma is a rather popular tool in additive combinatorics, some of its applications can be found in papers  \cite{ARL1}, \cite{ARL2}, \cite{GS_max_AP3} and \cite{ARL3}.

We hope that the tools developed in this paper will add flexibility to working with the doubling constants.

\section{Definitions}
\label{sec:def}

Below $\Gr$ denotes an abelian group with the group operation $+$. 
The sumset of two sets $A,B \subseteq \Gr$ was defined in formula \eqref{def:A+B_intr} of the introduction. 
In a similar way one can define the {\it difference sets} and the {\it higher sumsets}, e.g., $2A-A$ is $A+A-A$. 
Recall that the {doubling constant} of a finite set $A$ is 
given 
by the formula 
\begin{equation}\label{def:doubling}
    \mathcal{D} [A] := \frac{|A+A|}{|A|} \,.
\end{equation}
We say that the sum of $A$ and $B$ is direct 
if $|A+B|= |A||B|$.
In this case we sometimes write $A\dotplus B$. 
The important Pl\"unnecke--Ruzsa inequality (see, e.g., \cite{Ruzsa_Plun} or \cite{TV}) says that for any positive integers $n$ and $m$ 
the following holds 
\begin{equation}\label{f:Pl-R} 
    |nA-mA| \le \left( \frac{|A+A|}{|A|} \right)^{n+m} \cdot |A| \,.
\end{equation} 
Therefore, 
bound \eqref{f:Pl-R} connects the cardinality of the 
original 
set $A$ and its higher sumsets $nA-mA$. 
The common  additive energy of two sets was defined in \eqref{def:common_energy_intr} and the generalization of the common additive energy for four sets 
$A,B,C,D \subseteq \Gr$ is given by the formula 
\[
    \E(A,B,C,D) = \{ |\{ (a,b,c,d) \in A\times B \times C \times D ~:~ a+b= c+d \}| \,. 
\]
The disjoint union of two sets $A,B \subseteq \Gr$ is 
denoted 
as $A\bigsqcup B$.

Given finite sets $A, B\subseteq \Gr$ and a real number $T\ge 1$
 we defined the quantity $\mathcal{E}_T [A]$ in paper \cite{sh_common_energy} as 
\[
     \mathcal{E}_T [A] = \max_{X\subseteq A,\, |X|\ge |A|/T,\, Y \subseteq A}\, \frac{|X|^2 |Y|^2}{|A|\E(X,Y)} \,.
\]
    Thus Theorem \ref{t:sh_pre_intr} from the introduction says that the quantities $\mathcal{D}[A]$ and $\mathcal{E}_2 [A]$ are polynomially equivalent. 
    In this paper we consider the new quantity 
\[
    \mathcal{E}_* [A] = \max_{X \bigsqcup Y = A}\, \frac{|X|^2 |Y|^2}{|A|\E(X,Y)} \,.
\]
    In view of inequality \eqref{f:energy_CS} and the discussion after Theorem \ref{t:sh_pre_intr} 
    one has  for any $T\ge 2$ that 
\[
    \mathcal{E}_* [A] \le \mathcal{E}_2 [A] \le \mathcal{E}_T [A] \le \mathcal{D}[A] \,.
\]

The signs $\ll$ and $\gg$ are the usual Vinogradov symbols. 
If $a\ll b$ and $b\ll a$, then we write $a\sim b$. 
When the constants in the signs  depend on a parameter $M$, we write $\ll_M$ and $\gg_M$.
Let us denote by $[n]$ the set $\{1,2,\dots, n\}$.
All logarithms are to base $e$.
For a prime number $p$ we write $\F_p = \Z/p\Z$.



\section{The proof of the main result}
\label{sec:proof}

In this section we obtain the first part of Theorem \ref{t:main_intr} from the introduction. 
%
%
We start with a simple lemma.

\begin{lemma}
    Let $A,B\subseteq \Gr$, $|A|\ge |B|$ and 
    $\E(A,B) = |A||B|^2/K$.
    Then there is $X\subseteq \Gr$ such that 
\begin{equation}\label{f:E(A,B)_Schoen_X}
    \frac{|A|}{4K|B|} \le |X| \le \frac{2K|A|}{|B|} \,,
\end{equation}
    and 
\begin{equation}\label{f:E(A,B)_Schoen_B+X}
    |A \cap (B+X)|\ge \frac{|A|}{4K} \,.
\end{equation}
\label{l:E(A,B)_Schoen}
\end{lemma}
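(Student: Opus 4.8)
The plan is to build $X$ greedily, adjoining one translate of $B$ at a time, always picking a translate that covers many not-yet-covered points of $A$, and halting as soon as a $1/(4K)$-fraction of $A$ has been covered. For $S\subseteq\Gr$ write $r_S(x):=|S\cap(B+x)|$, so that $\sum_x r_S(x)=|S||B|$ and $\sum_x r_S(x)^2=\E(S,B)$; in particular $\sum_x r_A(x)^2=\E(A,B)=|A||B|^2/K$. Put $\tau:=|B|/(2K)$. Starting from $X=\emptyset$, as long as $C:=A\cap(B+X)$ satisfies $|C|<|A|/(4K)$ I would choose some $x$ with $r_{A\setminus C}(x)=|(B+x)\cap(A\setminus C)|\ge\tau$ and adjoin it to $X$; each such step strictly enlarges $C$, so the process terminates, and on termination $|A\cap(B+X)|\ge|A|/(4K)$, which is \eqref{f:E(A,B)_Schoen_B+X}. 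What remains is to verify that a valid $x$ is available at each step and that the final $|X|$ lands in the window \eqref{f:E(A,B)_Schoen_X}.

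The key step is availability of the greedy move. Suppose the current $C$ has $|C|<|A|/(4K)$ but $r_{A\setminus C}(x)<\tau$ for all $x$. Then
\[
\E(A\setminus C,B)=\sum_x r_{A\setminus C}(x)^2<\tau\sum_x r_{A\setminus C}(x)=\tau\,|A\setminus C|\,|B|\le\tau|A||B|=\frac{|A||B|^2}{2K}\,.
\]
On the other hand, I would use a stability estimate for the energy, namely $\E(A\setminus C,B)\ge\E(A,B)-2|C||B|^2$: since $C\subseteq A$ one has $r_{A\setminus C}(x)=r_A(x)-r_C(x)\ge0$, and expanding the square and discarding the nonnegative term $\sum_x r_C(x)^2$ leaves $\E(A,B)-2\sum_x r_A(x)r_C(x)\ge\E(A,B)-2(\max_x r_A(x))\sum_x r_C(x)\ge\E(A,B)-2|B|\cdot|C||B|$, using $\max_x r_A(x)\le|B|$ and $\sum_x r_C(x)=|C||B|$. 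As $|C|<|A|/(4K)$, this strictly exceeds $|A||B|^2/K-|A||B|^2/(2K)=|A||B|^2/(2K)$, contradicting the display. Hence the greedy move is always available while $|C|<|A|/(4K)$.

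For the cardinality bounds I would first record that the hypothesis forces $K\ge1$, since $\E(A,B)=\sum_x r_A(x)^2\le|B|\sum_x r_A(x)=|A||B|^2$. The upper bound on $|X|$ is then bookkeeping: each adjoined element increases $|C|$ by at least $\tau$, and we stop the first time $|C|\ge|A|/(4K)$, so just before the last step $|C|<|A|/(4K)$; thus if $|X|=m$ then $(m-1)\tau<|A|/(4K)$, i.e.\ $m<|A|/(2|B|)+1$, and a short computation using $K\ge1$ and $|A|\ge|B|$ gives $|A|/(2|B|)+1\le 2K|A|/|B|$. The lower bound is immediate from $|A|/(4K)\le|A\cap(B+X)|\le\sum_{x\in X}r_A(x)\le|X|\,|B|$.

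I expect the one genuinely delicate point to be the stability estimate $\E(A\setminus C,B)\ge\E(A,B)-2|C||B|^2$ together with pinning down the numerical constants so that \eqref{f:E(A,B)_Schoen_X} and \eqref{f:E(A,B)_Schoen_B+X} come out exactly as stated; the rest is routine. It is worth noting that the naive choice $X=\{x:r_A(x)\ge|B|/(2K)\}$ at once yields \eqref{f:E(A,B)_Schoen_B+X} but only the far weaker bound $|X|\le 2K|A|$, larger by roughly a factor of $|B|$, so the greedy thinning of this level set is essential.
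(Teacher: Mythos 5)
Your proposal is correct and follows essentially the same route as the paper: the identity $\E(A,B)=\sum_x|A\cap(B+x)|^2$, a greedy selection of translates of $B$ each covering at least $|B|/(2K)$ new points of $A$, and the energy-decrement bound $\E(A\setminus C,B)\ge\E(A,B)-2|C||B|^2$ to guarantee such a translate exists until a $1/(4K)$-fraction of $A$ is covered. The paper phrases the stopping rule in terms of half the energy being consumed rather than the coverage threshold, but the constants and the mechanism are the same.
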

\begin{proof}
    We have $\E(A,B) = \sum_{x} |A\cap (B+x)|^2$ and hence there is $x\in \Gr$ such that 
\[
    |A\cap (B+x)| \ge \frac{\E(A,B)}{|A||B|} \ge \frac{|B|}{K} \,.
\]
    Also, for $A' = A\setminus (B+x)$ one has
\[
    \E(A',B) \ge \E(A,B) - 2 |A\cap (B+x)| |B|^2 \,.
\]
    Thus, iterating this procedure we obtain a set $X\subseteq \Gr$ such that $|X||B|/2K \le |A|$ and 
\[
    2 |A \cap (B+X)| |B|^2 \ge 2^{-1} \E(A,B) = |A||B|^2/(2K) \,.
\]
    It follows that $|A \cap (B+X)| \ge |A|/(4K)$ and the last inequality implies 
    $|X| \ge |A|/(4K|B|)$. 
This completes the proof.
$\hfill\Box$
\end{proof}

\bp

Now we show that sets $A\subseteq \Gr$ with small quantity $\mathcal{E}_* [A]$ have some rather specific properties.
Namely, Proposition \ref{p:A'_large} below shows that if such $A$ contains a large subset with small doubling constant, then $A$ also has small doubling constant. 
Of course, this result
does not hold 
for an arbitrary set. 

\begin{proposition}
    Let $A\subseteq \Gr$ be a set,  $M:=\mathcal{E}_* [A]$ and there is a subset $A' \subseteq A$ such that 
\begin{equation}\label{cond:A'_large}
    |A'+A'| \le K|A'| 
    \quad \quad 
        \mbox{ and }
    \quad \quad 
    |A'| \ge (1-\eps) |A| \,,
\end{equation}
    where 
    $\eps = (2^{6} M^{})^{-1}$.
    Then $|A+A| \ll K^4 M^2 |A|$. 
\label{p:A'_large}
\end{proposition}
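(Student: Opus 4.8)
The plan is to work with the disjoint partition $A = A' \bigsqcup D$, where $D := A \setminus A'$; then $|D| \le \eps|A|$ and $|A'| \ge (1-\eps)|A| \ge |A|/2$. The case $D = \emptyset$ is immediate, since then $|A+A| = |A'+A'| \le K|A| \ll K^4 M^2|A|$, so assume $D \ne \emptyset$. First I would plug this partition into the definition of $M = \mathcal{E}_*[A]$, getting $\E(A',D) \ge |A'|^2|D|^2/(M|A|) \ge |A'||D|^2/(2M)$; writing $\E(A',D) = |A'||D|^2/K'$, this says $K' \le 2M$. Since $|A'| \ge |D|$, Lemma~\ref{l:E(A,B)_Schoen} applied to $(A',D)$ then produces a set $X$ with $|X| \le 2K'|A'|/|D| \le 4M|A'|/|D|$ such that $A'' := A' \cap (D+X)$ satisfies $A'' \subseteq D+X$ and $|A''| \ge |A'|/(4K') \ge |A'|/(8M)$. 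In other words, a $\Theta(1/M)$-dense subset $A''$ of the small-doubling set $A'$ is covered by a controlled number of translates of $D$.

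The heart of the proof is to upgrade this into a statement that $D$ is additively controlled by $A'$ --- concretely, $|D+A'| \ll M|A'|$, which by Ruzsa's covering lemma is the same as saying that $D \subseteq S + (A'-A')$ for some $S \subseteq D$ with $|S| \ll M$. This is the step I expect to be the main obstacle. A lower bound on the common energy $\E(A',D)$ by itself cannot yield an upper bound on $|D+A'|$ (large pairwise overlaps of the translates $A'+d$, $d \in D$, do not bound the size of their union), so one genuinely has to use the structural conclusion of Lemma~\ref{l:E(A,B)_Schoen}, i.e.\ the covering $A'' \subseteq D+X$ with $|A''| \gg |A'|/M$ and $|X| \ll M|A'|/|D|$, played against the small doubling $|A'+A'| \le K|A'|$. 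The approach I would pursue is to combine these --- passing, if needed, to subsets of $A'$ and reapplying both the Lemma and the hypothesis to partitions of $A$ refining $A' \bigsqcup D$, which keeps the relevant common energies and the counting under control --- in order to cover $D$ by only $O(M)$ translates of $A'-A'$. The delicate point is to do this in one stroke: a naive ``find a translate of $A'$ absorbing a $\Omega(1/M)$-fraction of the remaining part of $D$, then recurse'' argument loses a factor $\log|A|$, so one must exploit the ``all at once'' nature of Lemma~\ref{l:E(A,B)_Schoen}.

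Granting $D \subseteq S + (A'-A')$ with $|S| \le cM$, the conclusion is routine Pl\"unnecke--Ruzsa bookkeeping. From $A = A' \cup D \subseteq A' \cup \big(S + (A'-A')\big)$ one gets
\[
    A + A \ \subseteq\ (A'+A')\ \cup\ \big(S + (2A'-A')\big)\ \cup\ \big((S+S) + (2A'-2A')\big)\,,
\]
and \eqref{f:Pl-R} gives $|2A'-A'| \le K^3|A'|$ and $|2A'-2A'| \le K^4|A'|$, while $|A'+A'| \le K|A'|$ by hypothesis and $|S+S| \le |S|^2 \le c^2M^2$. Hence $|A+A| \le K|A'| + cM\,K^3|A'| + c^2M^2\,K^4|A'| \ll M^2 K^4|A'| \le M^2 K^4 |A|$, as required. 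The quantitative choice $\eps = (2^6 M)^{-1}$ enters precisely to make $|A'| \ge |A|/2$ and to keep $|D|$ small enough relative to $|A|$ for the energy lower bound and for Lemma~\ref{l:E(A,B)_Schoen} to apply with these constants.
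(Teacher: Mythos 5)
Your opening reduction (the partition $A=A'\bigsqcup D$, the bound $\E(A',D)\ge |A'||D|^2/(2M)$, the application of Lemma~\ref{l:E(A,B)_Schoen}) and your closing Pl\"unnecke bookkeeping are both fine, but the proof has a genuine gap exactly where you flag it: the claim that $D\subseteq S+(A'-A')$ with $|S|\ll M$ (equivalently $|D+A'|\ll M|A'|$) is never proved. What you write there is a statement of intent (``the approach I would pursue\dots''), not an argument, and the proposed route is doubtful on its own terms. The output of Lemma~\ref{l:E(A,B)_Schoen} points in the wrong direction: it covers a $\Theta(1/M)$-dense piece of $A'$ by $\approx M|A'|/|D|$ translates of $D$, whereas you need to cover \emph{all} of $D$ by $O(M)$ translates of $A'-A'$ with no $\log|A|$ loss. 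Also, the hypothesis $M=\mathcal{E}_*[A]$ only constrains two-part partitions of the whole set $A$, so ``reapplying the hypothesis to partitions refining $A'\bigsqcup D$'' is not directly available; you would have to engineer two-part partitions of $A$ whose energies control the quantities you want, and that is precisely the missing idea, not a routine iteration.

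For comparison, the paper resolves this by reversing the order of the two tools. First apply the Ruzsa covering lemma to get $Z\subseteq A$ with $A'+Z$ direct and $A\subseteq A'-A'+Z$; a crude count shows $|Z|\le 3|A|/4$, so $|A\setminus Z|\ge |A|/4$. Then apply the $\mathcal{E}_*[A]$ hypothesis to the partition $Z\bigsqcup (A\setminus Z)$ --- not to $A'\bigsqcup D$. Since $A\setminus Z$ agrees with a subset of $A'$ up to at most $\eps|A|$ elements, and since directness of $A'+Z$ forces $\E(Z,A')\le |Z||A|$, the lower bound $\E(Z,A\setminus Z)\ge |Z|^2|A\setminus Z|^2/(M|A|)$ collapses to $|Z|\le 2^5 M$ once $\eps=(2^6M)^{-1}$ absorbs the error term $2\eps|A||Z|^2$. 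Then $A+A\subseteq 2A'-2A'+Z+Z$ and \eqref{f:Pl-R} give $|A+A|\le |2A'-2A'|\,|Z+Z|\ll K^4M^2|A|$, which is exactly the absorption-in-one-stroke step your sketch is missing.
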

\begin{proof}
    We can assume that $|A|$ is sufficiently large; otherwise, there is nothing to prove. 
    By the covering lemma (see, e.g.,  \cite[Section 2.4]{TV}) we find $Z \subseteq A$ such that $|A'+Z| = |A'||Z|$ and $A\subseteq A'-A'+Z$. 
    Let us remark that 
\[
    (1-\eps) |A| |Z| \le |A'||Z| = |A'+Z| \le |A+A| \le 2^{-1}(|A|^2 + |A|)
\]
    and therefore 
\[
    |Z| \le \frac{|A|+1}{2(1-\eps)} \le \frac{3|A|}{4} \,.
\]
    In particular,  we have $|Z^c| \ge |A|/4$, where $Z^c = A\setminus Z$. 
    Put $A'_Z  = A'\cap Z^c$ and $\Omega = Z^c \setminus A'_Z$.
    Clearly, $|\Omega| \le \eps |A|$. 
    By the definition of the quantity $\mathcal{E}_* [A]$ and the fact 
    that the sum $A'+Z$ is direct, we have 
\[
    \frac{|Z|^2 |Z^c|^2}{M |A|} \le \E(Z,Z^c) \le \E(Z,A'_Z) + \E(Z,Z,Z^c, \Omega) + \E(Z,Z,A'_Z,\Omega)
    \le \E(Z,A') + 2 |\Omega| |Z|^2 
\]
\[
    \le 
    |Z||A| + 2\eps |A| |Z|^2 \,.
\]
    In view of the 
    inequality 
    $|Z^c| \ge |A|/4$ and our choice of $\eps$ one has 
\[
    |Z| \le \frac{2M|A|^2}{|Z^c|^2} \le 2^5 M \,.
\]
    Using the last bound, the covering lemma  and the Pl\"unnecke inequality \eqref{f:Pl-R}, we derive
\[
    |A+A| \le |2A'-2A'| |Z+Z| \ll K^4 M^2 |A| \,. 
\]
This completes the proof.
$\hfill\Box$
\end{proof}

\bp 

Now we are ready to prove the main result of this section.

\begin{theorem}
    Let $A\subseteq \Gr$ be a set.
    Then 
\begin{equation}\label{f:main}
    \mathcal{D} [A] \le \exp (O(\mathcal{E}^{1/3}_*[A] \cdot \log^2  \mathcal{E}_*[A] )) \,.
\end{equation}
\label{t:main}
\end{theorem}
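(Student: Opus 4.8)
\noindent\emph{Proof plan.} Put $N=|A|$ and $M=\mathcal{E}_*[A]$, and assume $M$ is large. The skeleton of the argument is Proposition~\ref{p:A'_large}: if one can delete only a $\tfrac1{2^6M}$-fraction of $A$ and land on a set of doubling $K$, then $\mathcal D[A]\ll K^4M^2$. So it suffices to produce $A'\subseteq A$ with $|A'|\ge\bigl(1-\tfrac1{2^6M}\bigr)N$ and $\mathcal D[A']$ of roughly the claimed size. Since the factor $K^4$ rules out iterating the proposition naively, the bound on $\mathcal D[A']$ must come from a covering construction rather than from the proposition.

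The engine of that construction is the hypothesis, unpacked through Lemma~\ref{l:E(A,B)_Schoen}. For any partition $A=\mathcal X\sqcup\mathcal Y$ with $|\mathcal X|\ge|\mathcal Y|\ge N/2$ we have $\E(\mathcal X,\mathcal Y)\ge|\mathcal X|^2|\mathcal Y|^2/(MN)$, hence (i) some single translate $\mathcal Y+t$ meets $\mathcal X$ in at least $|\mathcal X|/(2M)$ points, and (ii) Lemma~\ref{l:E(A,B)_Schoen} produces a set $W$ with $|W|\ll M|\mathcal X|/|\mathcal Y|$ and $|\mathcal X\cap(\mathcal Y+W)|\ge|\mathcal X|/(8M)$. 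I would begin from a ``seed'' $B\subseteq A$ with $|B|\ge N/M^{O(1)}$ and $|B+B|\le M^{O(1)}|B|$, which exists by the Balog--Szemer\'edi--Gowers theorem (a random balanced partition forces $\E(A,A)\gg N^3/M$), and grow the covered set: apply (ii) while it is the smaller side, (i) once it passes $N/2$; each step multiplies the accumulated covering set by at most $\mathrm{poly}(M)$ and shrinks the uncovered part by a factor $1-\Theta(1/M)$. Running this until the uncovered part falls below $N/(2^6M)$ exhibits $A'$ as a bounded number of translates of $B$, and a $\mathrm{poly}(M)$-per-step count gives $\mathcal D[A']\le\exp(O(M\log M))$; fed into Proposition~\ref{p:A'_large} this already yields the weaker bound $\mathcal D[A]\le\exp(O(M\log M))$.

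To reach $\exp(O(M^{1/3}\log^2M))$ the scheme must be iterated along a chain $A=A_0\supseteq A_1\supseteq\cdots\supseteq A_\ell$, each link deleting at most $|A_i|/(2^6\mathcal E_*[A_i])$ elements and carrying a strictly better doubling bound, the links then being glued back through $\ell$ successive applications of Proposition~\ref{p:A'_large}. As $\ell$ applications multiply the exponent of the doubling bound by $4^\ell$, one calibrates $\ell\asymp\tfrac16\log_2M$ (so that $4^\ell\asymp M^{1/3}$) and arranges the chain to terminate with $\mathcal D[A_\ell]\le\exp(O(\log^2M))$; unwinding then gives $\mathcal D[A]\le\exp(O(M^{1/3}\log^2M))$. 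The cube root is precisely the result of optimising $\ell$ against the per-link loss.

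The technical heart, and the main obstacle, is the chain itself: one must show that starting from a set of doubling $D$ with small $\mathcal E_*$, deleting a $\Theta(1/M)$-fraction and rerunning the covering iteration genuinely collapses $\log D$ — ideally by a constant factor per link — rather than merely preserving it, which is delicate exactly because each covering round makes only $1-\Theta(1/M)$ progress. This tension between the slow covering and the geometric gain of the proposition is what forces the dependence on $M$ to be subexponential yet super-polynomial, and matches the lower bound $\exp(\Omega(M^{1/4}\log M))$ from the second part of Theorem~\ref{t:main_intr} up to the gap between the exponents $\tfrac13$ and $\tfrac14$.
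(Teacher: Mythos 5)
Your first stage (BSG seed plus a covering iteration driven by Lemma~\ref{l:E(A,B)_Schoen}, then one application of Proposition~\ref{p:A'_large}) is sound, but it only gives $\exp(O(M\log M))$, as you note. The genuine gap is the second stage, which is where the exponent $1/3$ is supposed to come from: you propose to iterate Proposition~\ref{p:A'_large} along a chain $A=A_0\supseteq A_1\supseteq\cdots\supseteq A_\ell$ with $\ell\asymp\log M$ and to ``arrange'' the chain to end at $\mathcal D[A_\ell]\le\exp(O(\log^2 M))$. No mechanism is given for this (you flag it yourself as the main obstacle), and two concrete problems show the route cannot be repaired as stated. First, each link needs a bound on $\mathcal E_*[A_i]$, but $\mathcal E_*$ is a statement about partitions of the ambient set and is not inherited by subsets, so after the first deletion you have no hypothesis left to run either Proposition~\ref{p:A'_large} or the covering engine on $A_1,A_2,\dots$. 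Second, even granting such control, the required terminal set does not exist in general: in the paper's own counterexample (Theorem~\ref{t:constr}, the bridge-type set $A=\bigsqcup_j A_j$ with $\mathcal E_*[A]\ll M^4$), every admissible link deletes only an $O(1/\mathrm{poly}(M))$-fraction, so every $A_i$ in the chain still contains half of $A_1$ and half of $A_k$ and hence has doubling $\exp(\Omega(M\log M))=\exp(\Omega(\mathcal E_*[A]^{1/4}\log\mathcal E_*[A]))$, which is far larger than the quasi-polynomial terminal doubling your calibration $4^\ell\asymp M^{1/3}$ requires. So the chain scheme is not merely unproven; it is blocked by the lower-bound construction.

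For comparison, the paper extracts the $M^{1/3}$ from a refinement \emph{inside} a single covering iteration, and applies Proposition~\ref{p:A'_large} exactly once at the end. After a greedy preprocessing with threshold $T=CM^2$ (which guarantees no translate of $A'$ meets the uncovered set $Z$ in more than $|A'|/T$ points and thereby kills the cross term $\E(Y,A',Z,Z)$), one writes $\E(Y_*,Z)=|Y_*|^2|Z|/T_*$ for the current partition $Y\sqcup Z=A$ and exploits the parameter $T_*$ on both sides: the energy lower bound forces $|Y_*|\gg\sqrt{T_*/M}\,|Y|$ (the covered set grows), while Lemma~\ref{l:E(A,B)_Schoen} applied with the parameter $T_*$ (not the worst case $M$, as in your sketch) yields a covering set $X$ of size $O(M^2)$ with $|Z\cap(Y_*+X)|\ge|Z|/(4T_*)$ (the uncovered set shrinks). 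The dichotomy $T_*\ge R$ versus $T_*<R$ with $R=[M^{1/3}]$ then shows each block of $R$ steps either multiplies $|Y|$ by $1+c$ or multiplies $|Z|$ by $1-\tilde c$, so the whole process terminates in $O(M^{1/3}\log M)$ steps, each costing only a $\mathrm{poly}(M)$ factor in the covering; this is the missing idea you would need in place of the chain of Propositions.
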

\begin{proof}
    Let $M = \mathcal{E}_* [A]$.
    Split the set $A$ into two sets $|A_1| \sim |A_2| \sim  |A|/2$ in an arbitrary way. Then one has $\E(A)\gg |A|^3/M$. 
    Applying 
    a version of the Balog--Szemer\'edi--Gowers Theorem (see, e.g., \cite{Schoen_BSzG}), we find $A'\subseteq A$, $|A'| \gg |A|/M$ such that $|A'-A'| \ll M^4 |A'|$. 
    Let $T$ be a parameter, $T=CM^2$, where $C>1$ is a sufficiently large absolute constant. 
    Using 
    some kind 
    of greedy algorithm, we construct a set $Z\subseteq A$, $Z\cap A' = \emptyset$ such that for any $x\in \Gr$ one has 
\begin{equation}\label{f:Z_property}
    |Z \cap (A'+x)| \le \frac{|A'|}{T} \,.
\end{equation}
    Indeed, at the first step of our algorithm put $Z_0 = A\setminus A' := (A')^c$ and if \eqref{f:Z_property} takes place for $Z=Z_0$, then we are done. 
    Otherwise, there is $x_1 \in \Gr$ such that $|(A')^c \cap (A'+x_1)| \ge |A'|/T$. 
    Put $Y_1 := A' \bigsqcup ((A')^c \cap (A'+x_1))$ and $Z_1 = Y^c_1$. 
    If \eqref{f:Z_property} holds for $Z=Z_1$, then we are done.
    If not, then there is $x_2 \in \Gr$ such that 
    $|Z_1 \cap (A'+x_2)| \ge |A'|/T$ and we put 
    $Y_2 = Y_1 \bigsqcup (Z_1 \cap (A'+x_2))$ and $Z_2 = Y^c_2$. 
    And so on. 
    Clearly, the algorithm must stop after at most $s_1:=[T|A|/|A'|] = O(M^3)$ number of steps.

    Now let $Y:=Y_{s_1}$, $Z=Z_{s_1}$ and $Y_* = Y\setminus A'$. 
    Below we assume that $|Z| \ge |A|/2$. 
    Since $Y \bigsqcup Z = A$, then thanks to our choice of parameter $T$ (here it is sufficient to have $T=\Omega(M)$) we 
    derive 
    that
\begin{equation}\label{tmp:21.02_1}
    \frac{|A'| |Y| |Z|^2}{M|A|} 
    \le 
    \frac{|Y|^2 |Z|^2}{M|A|} \le \E(Y_* \bigsqcup A',Z) \le 
    \E(Y_*,Z) + 2 \E(Y,A',Z,Z) 
\end{equation}
\begin{equation}\label{tmp:21.02_2}
    \le \E(Y_*,Z) + \frac{2 |A'||Y||Z|}{T} 
    \le \E(Y_*,Z) + \frac{|Y|^2 |Z|^2}{2M|A|} \,.
\end{equation}
    Thus 
    writing $\E(Y_*,Z) = |Y_*|^2 |Z|/T_*$ for some $T_* \ge 1$, we obtain 
\begin{equation}\label{tmp:21.02_3}
    \frac{|Y|^2 |Z|^2}{2M|A|} \le \E(Y_*,Z)  = 
    \frac{|Y_*|^2 |Z|}{T_*}
    \,,
\end{equation}
    and hence 
\begin{equation}\label{f:Y*_growth}
    |Y_*| \ge \sqrt{\frac{T_*}{M}} \cdot \frac{|Y|}{2} \ge \frac{|A'|}{2\sqrt{M}} \gg \frac{|A|}{M^{3/2}} \,.
\end{equation}
    Also, notice that estimate \eqref{tmp:21.02_3} implies the following rough bound on $T_*$ 
\begin{equation}\label{f:T_*_M}
    T_* \le \frac{2M |A|}{|Z|} \le 4M \,.
\end{equation}
    On the other hand, by the definition of  the quantity $T_*$ one has $\E(Y_*,Z) = \frac{|Y_*|^2 |Z|}{T_*}$ and hence applying Lemma \ref{l:E(A,B)_Schoen}, bound \eqref{f:Y*_growth}, combining with estimate \eqref{f:T_*_M}, we find $X\subseteq \Gr$ with 
\begin{equation}\label{f:X_second_case}
    |X| \le \min\left\{ \frac{4M|A|}{|Y_*|}, \frac{2T_* |Z|}{|Y_*|} \right\} \le
    \frac{4 \sqrt{M T_*} |Z|}{|Y|}
    \le 
    \frac{4 \sqrt{M T_*} |A|}{|A'|}
    \ll M^2  
\end{equation}    
    and such that 
\begin{equation}\label{f:T*_2}
    |Z \cap (Y_* + X)| \ge \frac{|Z|}{4T_*} \,.
\end{equation}
    Now we apply the same algorithm starting with the set $Y(1):=Y$.
    And so on. 
    Thus we construct an increasing sequence of sets $A' \subseteq Y(1) \subseteq Y(2) \subseteq \dots \subseteq Y(t') \subseteq A$ and let us put $Z(j) = Y^c(j)$. 
    Inequality \eqref{f:Y*_growth}, combined with the estimate $|A'| \gg |A|/M$ show that we obtain $|Z(t')| < |A|/2$ after at most $t' = O(M^{1/2} \log M)$ number of steps. 
    Let us improve the last  bound and check 
    that in view of inequality \eqref{f:T*_2} 
    the real number of steps $t'$ is $O(M^{1/3} \log M)$.
    Indeed put $R = [M^{1/3}]$ and consider the first $R$ sets $Y(j)$, $j\in [R]$. 
    Besides sets $Y(j)$ for any $j\in [R]$ we have sets $Z(j)$, where $Y(j) \bigsqcup Z(j) = A$ and the parameter $T_* (j)$. 
    Then we have two cases: either there are at least $R/2$ numbers $j\in [R]$ such that $T_* (j) \ge R$ or there exist  at least $R/2$ numbers $j\in [R]$ such that $T_* (j) < R$.  
    In the first case we use inequality \eqref{f:Y*_growth} and see that 
\begin{equation}\label{tmp:25.02_1}
    |Y(R)| \ge |A'| \left( 1 + \frac{\sqrt{R}}{2\sqrt{M}} \right)^{R/2} \ge  |A'| (1+c) \,,
\end{equation}
    where $c>0$ is an absolute constant. 
    In the second case one can apply estimate \eqref{f:T*_2} and obtain 
\begin{equation}\label{tmp:25.02_2}
    |Z(R)| \le |Z| \left( 1- \frac{1}{4R} \right)^{R/2}
    \le (1-\tilde{c}) |Z| \,,
\end{equation}
    where $\tilde{c}>0$ is another absolute constant and hence, say, 
    $$
        |Y(2R)| \ge (1+2^{-1} \tilde{c}) |Y|
        \ge (1+2^{-1} \tilde{c}) |A'| \,.
    $$
    Formulae \eqref{tmp:25.02_1}, \eqref{tmp:25.02_2} show that in any case we cannot have more than $t'=O(R \log M) = O(M^{1/3} \log M)$ steps.

    Having reached the inequality $|Z(t')|< |A|/2$, we apply the same algorithm to the set $Y(t')$ to obtain another increasing sequence of sets $Y(t') \subseteq Y(t'+1) \subseteq \dots \subseteq Y(t'+t'')$ and so on, but this time suppose that for all $j\ge t'$ one $|Z(j)| \ge \eps |A|$ holds, where $\eps:= (2^6 M)^{-1}$.
    Now we have $|Y(j)| \ge |A|/2$, $j\ge t'$, $T=C M^2$ and repeating the computations in \eqref{tmp:21.02_1}---\eqref{tmp:21.02_3}, we see that 
\[
    \frac{|Z (j)|^2 |A|}{8M} \le \frac{|Y (j)|^2 |Z(j)|^2}{2M|A|} \le \E(Y_* (j),Z(j))  = \frac{|Y_* (j)|^2 |Z(j)|}{T_* (j)} \,,
\]
    and hence 
\begin{equation}\label{f:Y*_growth2}
    |Y_* (j)| \ge |Z(j)| \cdot \left( \frac{T_* (j) |A|}{8M |Z(j)|} \right)^{1/2} \ge |Z(j)| \cdot \left( \frac{T_* (j)}{4M} \right)^{1/2}\,.
\end{equation}
    As above we consider two cases: either there are at least $R/2$ numbers $j\in [R]$ such that $T_* (j) \ge R$ or there exist  at least $R/2$ numbers $j\in [R]$ such that $T_* (j) < R$. 
    Thus  
    bounds \eqref{tmp:25.02_2}, \eqref{f:Y*_growth2}  show that 
    $t''=O(R \log M) = O(M^{1/3} \log M)$ 
    and therefore 
    the set $\mathcal{A}:= Y(t)$ has size at least $(1-\eps)|A|$, where $t:=t'+t'' \ll M^{1/3} \log M$.
    Let us show that $\mathcal{A}$ has small doubling and then apply Proposition \ref{p:A'_large} to obtain that our initial set $A$ has small doubling. 
    We have $\mathcal{A} \subseteq A' + X_1 + \dots + X_t$, where one has $|X_j| \ll M^{3}$ thanks to our choice of the parameter $T$,  bounds 
    \eqref{f:T_*_M}, \eqref{f:X_second_case}, \eqref{f:Y*_growth2} and the estimate $|A'| \gg |A|/M$. 
    It follows that 
\[
    |\mathcal{A} + \mathcal{A}| \le  (|X_1| \dots |X_t|)^2 |A'+A'| 
    \ll 
     \exp( O(t \log M) ) |A'| \ll  \exp( O(M^{1/3} \log^2 M) ) \cdot |\mathcal{A}| \,.
\]
    Using Proposition \ref{p:A'_large}, we complete the proof. 
$\hfill\Box$
\end{proof}

\section{A counterexample}
\label{sec:counterexm}

In this section we obtain the second part of Theorem \ref{t:main_intr}. 
Our construction follows  naturally from the proof of Theorem \ref{t:main} of  the previous section 
(in the notation of the latter theorem we basically construct the sets $Y(j)$ in the example below)
and 
resembles a ``bridge graph'' see, e.g., \cite[Section 6.5]{TV} and especially \cite{ER_connectedness} (note also that in \cite[Remark 4.3]{ER_connectedness} the connectedness $\a$ is of order $1/k^2$, not $1/k$ as claimed). 

\begin{theorem}
    Let $p>2$ be a prime number.
    Then there exists a set $A\subseteq \F_p^n$ such that 
\begin{equation}\label{f:constr}
    \mathcal{D}[A] = \min \left\{ \Omega_p \left( \frac{|A| \log^2 \log |A|}{\log^2 |A|} \right),\,  \exp(\Omega_p (\mathcal{E}^{1/4}_*[A] \cdot \log \mathcal{E}_*[A] )) \right\} \,.
\end{equation}
\label{t:constr}
\end{theorem}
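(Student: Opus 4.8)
The plan is to realise the extremal set as an additive ``bridge''. Fix a large ambient space $\F_p^N$ with basis $e_1,\dots,e_N$ and, for parameters $m,d$ (to be optimised at the end), put $U_i=\Span\{e_{i+1},\dots,e_{i+d}\}$ for $0\le i\le m$, so that $\dim(U_i\cap U_j)=\max\{0,\,d-|i-j|\}$ and hence $|B_i\pm B_j|=p^{\,d+\min\{d,\,|i-j|\}}$ for the cosets $B_i:=w_i+U_i$. Choose the shifts $w_i$ in ``private'' coordinates, say $w_i=e_{N-i}$, so that $A:=B_0\bigsqcup\dots\bigsqcup B_m$ is a genuine disjoint union and, more importantly, all cosets $w_i+w_j+(U_i+U_j)$ with $0\le i\le j\le m$ occurring in $A+A$ are pairwise disjoint. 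Consecutive blocks are then almost aligned ($U_i,U_{i+1}$ share a subspace of codimension one) while far blocks sit in nearly independent subspaces — the additive incarnation of the Erd\H{o}s--R\'enyi bridge graph. By the disjointness of the sumset cosets one gets $|A+A|=\sum_{0\le i\le j\le m}p^{\,d+\min\{d,\,j-i\}}$ and $|A|=(m+1)p^{d}$, which one evaluates in closed form; in the regime $d\asymp m$ this yields $m\asymp_p\log|A|$ and $\mathcal D[A]=\Omega_p\!\big(|A|\log^2\log|A|/\log^2|A|\big)$ (the $\log^2\log$ coming from a further optimisation of $p$), while simultaneously $\mathcal D[A]$ is exponentially large in $m$, which after expressing $m$ through $\mathcal E_*[A]$ gives \eqref{f:main_intr_3}. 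The two expressions in \eqref{f:constr} correspond to these two tunings of $(p,m,d)$, the exponent $1/4$ appearing when one balances the number of blocks against the per-block energy cost.

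The core is the matching upper bound on $\mathcal E_*[A]$, i.e.\ a uniform estimate $\E(X,Y)\ge |X|^2|Y|^2/(K|A|)$ for every partition $A=X\bigsqcup Y$. Put $a_i=|X\cap B_i|$, $b_i=|Y\cap B_i|=p^{d}-a_i$. For each $t$ the quantity $|X\cap(Y+t)|=\sum_{i,j}\big|(X\cap B_i)\cap\big((Y\cap B_j)+t\big)\big|$ is a sum of non-negative terms, the $(i,j)$-term being supported, as a function of $t$, on a single coset of $U_i+U_j$; squaring, discarding cross terms and summing over $t$ gives
\[
    \E(X,Y)\ \ge\ \sum_{i,j}\E(X\cap B_i,\,Y\cap B_j)\ \ge\ \sum_{i,j}\frac{a_i^2 b_j^2}{|B_i+B_j|}\ =\ \sum_{i,j}\frac{a_i^2 b_j^2}{p^{\,d+\min\{d,\,|i-j|\}}}\,.
\]
Everything therefore reduces to the purely combinatorial ``weighted connectedness'' inequality
\[
    \sum_{i,j}\frac{a_i^2 b_j^2}{p^{\,d+\min\{d,\,|i-j|\}}}\ \ge\ \frac{\big(\sum_i a_i\big)^2\big(\sum_i b_i\big)^2}{K\,(m+1)\,p^{d}}\qquad\big(0\le a_i\le p^{d},\ a_i+b_i=p^{d}\big)
\]
with $K$ of the claimed order. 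This is the additive analogue of the Erd\H{o}s--R\'enyi estimate for bridge graphs, with the essential difference that the left-hand side is a genuine energy (a quadratic form) rather than a linear edge count. I would prove it by splitting $\{0,\dots,m\}$ into dyadic ranges according to the partial sums of $(a_i)$ and of $(b_i)$, bounding each pair of ranges by the corresponding weight $p^{-\min\{d,\,\cdot\}}$, and summing; the extremal configurations turn out to be the ``prefix'' cuts $X=B_0\cup\dots\cup B_k$, and tracking them precisely is what fixes the exponent.

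The main obstacle is exactly this last inequality, for two reasons. First, the adversary may split a block $B_i$ into a structured sub-coset and its complement, so the crude per-pair bound $\E(X\cap B_i,Y\cap B_j)\ge a_i^2b_j^2/|B_i+B_j|$ has to be shown to lose nothing at the optimum — one must verify that the extremal cut never profits from such internal splitting, and if it does, replace each $B_i$ by a recursively ``robust'' block and run the whole scheme through $\asymp\log\mathcal D[A]$ scales. Second, pinning down the correct power of $\mathcal E_*[A]$ (hence the $\tfrac14$ in \eqref{f:main_intr_3}, kept as close as possible to the $\tfrac13$ of \eqref{f:main_intr_2}) needs a careful analysis of the prefix cuts together with the right choice of $d$ relative to $m$ (and possibly of $p$). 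The remaining ingredients — the sumset computation, the verification that the generic shifts do their job, and the bookkeeping that turns the parameters into the bounds in \eqref{f:constr} — are routine.
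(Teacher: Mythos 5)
Your construction is genuinely different from the paper's: the paper chains blocks $A_j=L_j\dotplus(H_2^*\dotplus\cdots\dotplus H_j^*)$ whose ``decay base'' $M$ is coupled to the number of blocks $k\sim M$ (that coupling is exactly what produces both the exponent $1/4$ and the $\log^2\log|A|$ factor), whereas you take a sliding--window coset chain with constant decay base $p$. The two displayed reductions you do write down are correct: $\E(X,Y)\ge\sum_{i,j}\E(X\cap B_i,Y\cap B_j)$ and the per--pair Cauchy--Schwarz bound $\E(X\cap B_i,Y\cap B_j)\ge a_i^2b_j^2/p^{\,d+\min\{d,|i-j|\}}$, and the sumset computation with private shifts is fine for $p>2$. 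But the heart of the proof --- the uniform lower bound $\E(X,Y)\ge|X|^2|Y|^2/(K|A|)$ over \emph{all} partitions, i.e.\ your ``weighted connectedness'' inequality --- is not proved: you only sketch a dyadic-range plan, flag it yourself as the main obstacle, and speculate about replacing blocks by recursively ``robust'' ones through $\asymp\log\mathcal{D}[A]$ scales. That is a genuine gap, and the proposed remedy points in the wrong direction: no recursion or extremal-cut analysis is needed, because the crude block-level bound you already have suffices. Writing $a_i+b_i=P:=p^d$, one checks in three cases that
\[
\sum_{i,j}\frac{a_i^2b_j^2}{p^{\min\{d,|i-j|\}}}\ \gg\ \frac{(\sum_i a_i)^2(\sum_i b_i)^2}{p\,(m+1)^4}\,:
\]
either some block is balanced ($a_{i^*},b_{i^*}\ge P/4$, use the diagonal weight $1$), or an $a$-majority block is adjacent to a $b$-majority block (use the weight $p^{-1}$), or all blocks have the same majority type (then $\sum b_i$ is small and the diagonal again suffices). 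This yields $\mathcal{E}_*[A]\ll p\,m^3$ with a half-page argument, so the difficulty you identified (internal splitting of blocks) costs nothing.

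The second gap is the parameter bookkeeping, which is asserted rather than carried out and is wrong in two places. With fixed $p$ and $d\asymp m$ your family has $\mathcal{E}_*[A]\asymp_p m^3$ and $\mathcal{D}[A]\asymp_p p^m/m$, i.e.\ it gives $\mathcal{D}[A]\ge\exp(\Omega_p(\mathcal{E}_*^{1/3}[A]))$ --- there is no balancing step that produces the exponent $1/4$; this is actually \emph{stronger} than \eqref{f:constr} once $\mathcal{E}_*[A]$ is large, but your stated route to $1/4$ does not correspond to anything in the construction and would need to be replaced by the computation above. More seriously, the first term of the minimum cannot be obtained ``by a further optimisation of $p$'': the prime $p$ is fixed in the statement, and with constant base your extreme regime only gives $\mathcal{D}[A]=\Omega_p(|A|/\log^2|A|)$, missing the $\log^2\log|A|$ factor. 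To recover it you must let the decay base grow with $|A|$ (e.g.\ shift the window by $r\approx\log_p\log|A|$ coordinates, base $q=p^r$), at which point the energy bound degrades to $\mathcal{E}_*[A]\ll q\,m^3$ and a trade-off of the paper's $1/4$-type reappears in that regime; this is precisely the role played in the paper by taking $k\sim M$ with growing $M$. So the approach is salvageable --- and in the constant-base regime even sharper than the paper's --- but as written the central inequality and the quantitative conclusions of \eqref{f:constr} are not established.
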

\begin{proof}
    Let $a$, $M> p$ and $k$, $M\ll k \le M/2$, $k M^{k-1} \ll a$ be some integer parameters which we will choose later and we suppose that 
    $a/k$ 
    and $M+1$ are some  powers of 
    $p$. 
    We take $L_k < L_{k-1} < \dots < L_1 < \F_p^n$ be some subspaces, 
    $\frac{a}{kM^{j-1}} \le |L_j| \le \frac{ap}{kM^{j-1}}$, $j\in [k]$, and
    $H_2,\dots,H_k < \F_p^n$ be another collection of subspaces, where $|H_j|=M+1$, $j=2,\dots,k$.
    Put $H^*_j = H_j \setminus \{0\}$.
    We assume that $H^*_2,\dots,H^*_k$ and $L_1$ are mutually independent in the sense that 
    the sumset $H^*_2 + \dots + H^*_k + L_1$  is direct 
    (for example, $L_1$ occupies the first $m_1$ coordinates in the standard basis and each $H^*_2, \dots, H^*_k$ occupies some other coordinates in $\F^n_p$).
    Put $A_1 = L_1$ and $A_j= L_j \dotplus (H^*_2 \dotplus \dots \dotplus H^*_j)$, $j=2,\dots,k$ and let $A=\bigsqcup_{j=1}^k A_j$. 
    Then $a\le |A| \le p a$ and 
\begin{equation}\label{tmp:19.02_1}
    |A+A| \ge |A_1 + A_k| = |L_1| |H^*_2 + \dots + H^*_k|
    \ge \frac{M^{k-1} a}{k} 
    \gg_p |A| \cdot \exp(\Omega (M \log M))
    \,,
\end{equation}
    and thus $\mathcal{D}[A] = \exp(\Omega_p (M \log M))$.

    Now let us estimate $\mathcal{E}_*[A]$.
    Take any sets $S,T$ such that $A=S\bigsqcup T$. 
    We have either 
\begin{equation}\label{tmp:19.02_2}
    |S\cap A_1| \ge \frac{|A_1|}{2} = \frac{a}{2k} 
\end{equation}
    or $|T\cap A_1| \ge |A_1|/2$ (or both). 
    Without loosing of the generality assume that 
    inequality \eqref{tmp:19.02_2} 
    takes place. 
    Choose the maximal $j$ such that $|S\cap A_i| \ge |A_i|/2$ for all $i\in [j]$.
    Therefore  
    we see that if  $j<k$,
    then 
    $|T\cap A_{j+1}| \ge |A_{j+1}|/2$.
    Put $S' = S'_j := S \cap A_j$ and $T' = T'_j := T \cap A_{j+1}$. 
    By the construction one has 
\begin{equation}\label{tmp:20.02_1}
    |S'| \ge \frac{a}{2k} \ge \frac{|S|}{2kp} \,,
        \quad \quad 
        \mbox{ and }
        \quad \quad 
    |T'| \ge \frac{a}{2k} \ge \frac{|T|}{2kp} \,,
\end{equation}
    provided $j<k$. 
    Put 
    $\Lambda_S = H^*_2 \dotplus \dots \dotplus H^*_j$, $\Lambda_T = H^*_2 \dotplus \dots \dotplus H^*_{j+1}$ and then  by  the mutual independence, we have  
\[
    S' = \bigsqcup_{\la \in \Lambda_S} \{ s'\in S' ~:~ s' = \lambda + l,\, l\in L_j \}
    = \bigsqcup_{\la \in \Lambda_S} S'_\la \,,
\]
    and similarly for $T'$.
    Let $\tilde{\Lambda}_S$ be the collection of $\la \in \Lambda_S$ such that $|S'_\la| \ge |S'|/(2|\Lambda_S|)$. 
    Then
\[
    \sum_{\la \in \tilde{\Lambda}_S} |S'_\la| \ge \frac{|S'|}{2} \ge \frac{|L_j| M^{j-1}}{4}
\]
    and hence $|\tilde{\Lambda}_S| \ge |\Lambda_S|/4$.
    In a similar way, one can define $\tilde{\Lambda}_T$ and then 
    $|\tilde{\Lambda}_T| \ge |\Lambda_T|/4$.
    Finally, put $S'' = \bigsqcup_{\la \in \tilde{\Lambda}_S} S'_\la$ and 
    $T'' = \bigsqcup_{\la \in \tilde{\Lambda}_T} T'_\la$.
    Let us 
    estimate 
    $\E(S'',T'')$ 
    from below, this will give us a lower bound for $\E(S,T)$. 
    Applying the mutual independence, the estimate $j\le k\le M/2$ and the Cauchy--Schwarz inequality twice, we  have 
\begin{equation}\label{f_E_comp1}
     \E(S'', A_j, T'',A_{j+1})  \ge  \sum_{\la_1, \la_2 \in \tilde{\Lambda}_S,\,  
     \mu_1, \mu_2 \in \tilde{\Lambda}_T ~:~ \la_1 + \la_2 = \mu_1 + \mu_2}
     \E(S'_{\la_1}, L_j, T'_{\mu_1}, L_{j+1}) 
\end{equation}
\[
    \ge 
     \sum_{\la_1, \la_2 \in \tilde{\Lambda}_S,\,  
     \mu_1, \mu_2 \in \tilde{\Lambda}_T ~:~ \la_1 + \la_2 = \mu_1 + \mu_2}
     \frac{|S'_{\la_1}| |T'_{\mu_1}||L_j||L_{j+1}|}{|L_{j}|}
        \gg 
        \frac{|L_{j+1}| |A_j| |A_{j+1}|}{|\Lambda_S||\Lambda_T|} \cdot \E(\tilde{\Lambda}_S, \tilde{\Lambda}_T) 
\]
\[
    \gg \frac{|L_{j+1}| |A_j| |A_{j+1}| |\Lambda_S| |\Lambda_T|}{|\Lambda_S + \Lambda_T|}
    \ge 
    \left( 1 - \frac{1}{M+1} \right)^j \cdot |L_{j+1}| |A_j| |A_{j+1}| |\Lambda_S|
\]
\begin{equation}\label{f_E_comp2}
    \gg
    \frac{|L_{j+1}| |S'|^2 |T'|}{|L_j|} 
    \gg_p 
    \frac{|S'|^2 |T'|}{M} \,. 
\end{equation}
    Now 
    using 
    the Cauchy--Schwartz inequality one more time, 
    we get 
\begin{equation}\label{tmp:27.02_1}
    \E(S,T) \E(A_j, A_{j+1}) \ge \E(S'',T'') \E(A_j, A_{j+1}) \gg_p  \frac{|S'|^4 |T'|^2}{M^2} \,.
\end{equation}
    From  the mutual independence it is easy to see that 
\begin{equation}\label{f:E_Aj_Aj+}
    \E(A_j,A_{j+1}) \ll_p \left( \frac{a}{k} \right)^3 M^{-(j-1)} M^{-2j} (M^{j-1})^3 M \ll \frac{a^3}{Mk^3} \,,
\end{equation}
    similarly,
\begin{equation}\label{f:E_Aj_Aj}
     \E(A_j) \ll_p \left( \frac{a}{k} \right)^3 M^{-3(j-1)}  (M^{j-1})^3 \ll \frac{a^3}{k^3} \,,
\end{equation}    
    and hence in view of \eqref{tmp:20.02_1} and our choice of  the parameter $k$ one derive from \eqref{tmp:27.02_1}
\begin{equation}\label{f:E_first}
    \E(S,T) \gg_p \frac{k^3 |S'|^4 |T'|^2}{Ma^3} \gg \frac{k |S'|^2 |T'|^2}{M a}
    \gg \frac{|S|^2 |T|^2}{Mk^3 a} \gg \frac{|S|^2 |T|^2}{M^4 |A|} \,.
\end{equation}

    Now assume that $j=k$.
    In this case we have $|S\cap A_i| \ge |A_i|/2$ for all $i\in [k]$ and in particular $|S| \ge  a/2$.
    Let $J\subseteq [k]$ be the collection of indexes such that $|T\cap A_{j}| \ge |T|/(2k)$. 
    For $j\in J$ put $T^*_{j} := T\cap A_{j}$. 
    Applying the previous argument for all $j\in J$ and repeating the computations in \eqref{f_E_comp1}---\eqref{f_E_comp2}, we obtain 
\[
    \E(S''_j,A_{j}, T^*_{j}, A_{j})
    \gg |L_{j}| |A_{j}| |T^*_{j}| |\Lambda_S|
    \gg 
    |S'_{j}|^2 |T^*_{j}| \,.
\]
    Thus summing over $j\in J$ and using the Cauchy--Schwarz inequality,  we obtain as above
\[
    \sum_{j\in J} \E^{1/2} (S''_j,T^*_{j})  \E^{1/2} (A_j)
    \gg 
    \sum_{j\in J} 
    |S'_{j}|^2 |T^*_{j}|
    \gg 
    \frac{|S|^2 |T|}{k^2} \,.
\]
    It remains to apply  the Cauchy--Schwarz inequality one more time, as well as bound \eqref{f:E_Aj_Aj} to derive 
\[
    \frac{a^3}{k^3} \E(S,T) k \gg_p \frac{|S|^4 |T|^2}{k^4} \gg  \frac{|S|^2 |T|^2 a^2}{k^4} \,.
\]
    This  implies 
\begin{equation}\label{f:E_second}
    \E(S,T) \gg_p \frac{|S|^2 |T|^2 }{k^2 a} \gg 
    \frac{|S|^2 |T|^2 }{M^2 |A|}  \,.
\end{equation}
    Combining bounds \eqref{f:E_first} and \eqref{f:E_second}, we see that $\mathcal{E}_* [A] \ll_p M^4$. 
    Substituting this bound into \eqref{tmp:19.02_1}, we obtain the result. 
    It remains to check the condition $k M^{k-1} \ll M^M \ll a$ but if not, then choose $k\sim M$ such that $M^M \sim |A|$ and then bound \eqref{tmp:19.02_1} gives us $\mathcal{D}[A] \gg \Omega_p \left( \frac{|A| \log^2 \log |A|}{\log^2 |A|} \right)$. 
This completes the proof.
$\hfill\Box$
\end{proof}

\section{An application}
\label{sec:application}

Now we obtain an application to the arithmetic regularity lemma, see \cite{GS_max_AP3}. We formulate our result in the following form.

\begin{theorem}
    Let $A \subseteq \Gr$ be  a set, $\E(A) = |A|^3/K$ and let $\eps, \o \in (0,1/4]$ be parameters.
    Then there is a decomposition of $A$ as a disjoint union
    $A=\left(\bigsqcup_{j=1}^k A_j \right) \bigsqcup \Omega$ such that\\
    $(i)~$ (Components are large). $|A_j| \ge \sqrt{\o/2K} \cdot |A|$, $j\in [k]$.\\
    $(ii)~$ (Components are structured). $\mathcal{E}_* [A_j] \le 4 K (\o \eps)^{-2}$, $j\in [k]$.
    In particular, for any $j\in [k]$ one has $\mathcal{D}[A_j] \ll \exp ( O (K^{1/3} (\o \eps)^{-2/3} \cdot \log^2 (K\o^{-1} \eps^{-1}) ) )$.\\
    $(iii)~$ (Different components do not communicate). $ \E(A_i,A_j)\le \eps \frac{|A_i|^2 |A_j|^2}{|A|}$, $i,j \in [k]$, $i\neq j$. \\
    $(iv)~$ (Noise term). $\E (\Omega,A) \le \omega \E(A)$, $j\in [k]$.
\label{t:str}
\end{theorem}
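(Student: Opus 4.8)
The proof is an energy--increment (regularity) argument with a discard term. The plan is to maintain a disjoint partition $A=\Omega\sqcup\bigsqcup_{P\in\mathcal P}P$, starting from $\Omega=\emptyset$, $\mathcal P=\{A\}$, together with the index
\[
\Phi(\mathcal P)=\frac1{|A|}\sum_{P\in\mathcal P}\frac{\E(P)}{|P|^{2}}\,,
\]
which lies in $[0,1]$ (since $|P|^{2}\le\E(P)\le|P|^{3}$, and $\E$ is monotone under passing to subsets) and equals $1/K$ at the start. At a generic stage one looks for a defect of one of three types: $(a)$ some $P\in\mathcal P$ has $|P|<\sqrt{\o/2K}\,|A|$; $(b)$ some $P\in\mathcal P$ has $\mathcal E_*[P]>4K(\o\eps)^{-2}$; $(c)$ there are $P_i\ne P_j\in\mathcal P$ with $\E(P_i,P_j)>\eps|P_i|^{2}|P_j|^{2}/|A|$. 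Defects are handled with priority $(a)$, then $(b)$, then $(c)$. If none occurs, the current data satisfies $(i)$, $(ii)$, $(iii)$ — with $(iv)$ guaranteed by the discard rule below — and the ``in particular'' clause of $(ii)$ is just Theorem~\ref{t:main} applied to each $A_j$.

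A type-$(a)$ defect is resolved by moving $P$ into $\Omega$. A type-$(b)$ defect: the definition of $\mathcal E_*$ furnishes a bipartition $P=X\sqcup Y$ with $\E(X,Y)<|X|^{2}|Y|^{2}/\bigl(4K(\o\eps)^{-2}|P|\bigr)$, and we replace $P$ by $X$ and $Y$; such a witnessing bipartition automatically has $\min(|X|,|Y|)>4K(\o\eps)^{-2}$, since $\E(X,Y)\ge|X||Y|$ forces the defining ratio to be at most $\min(|X|,|Y|)$. A type-$(c)$ defect is resolved by merging $P_i,P_j$ into $P_i\cup P_j$. The analytic core is the expansion, obtained by writing out $r_{(X\sqcup Y)-(X\sqcup Y)}=r_{X-X}+r_{X-Y}+r_{Y-X}+r_{Y-Y}$ (with $r$ the representation function) and applying Cauchy--Schwarz on the Fourier side together with $\E(X),\E(Y)\le\E(X\sqcup Y)$,
\[
\E(X\sqcup Y)=\E(X)+\E(Y)+O\!\bigl(\E(X,Y)^{1/2}\E(X\sqcup Y)^{1/2}\bigr),
\]
in which the error term is nonnegative. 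In a type-$(b)$ step the error is tiny — the cut energy $\E(X,Y)$ is smaller than $|P|^{3}$ by a factor $\gg K(\o\eps)^{-2}$ — so, after reducing to a roughly balanced witnessing cut, the split changes $\Phi$ by a definite amount $\gg(\o\eps/K)^{O(1)}$. In a type-$(c)$ step the error $\E(X\sqcup Y)-\E(X)-\E(Y)\ge 2\E(P_i,P_j)$ is by contrast large, giving a definite change of $\Phi$ in the opposite sense, again $\gg(\o\eps/K)^{O(1)}$. Since splitting and merging are inverse in nature, $\Phi$ alone is not monotone; instead one runs a potential argument combining $\Phi$ with the number of parts $|\mathcal P|$ (always at most $\sqrt{2K/\o}$ by $(a)$), exploiting the sign asymmetry above, to bound the total number of steps by $(K/\o\eps)^{O(1)}$, so the procedure terminates.

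It remains to see the noise bound $(iv)$, and this is where the threshold $\sqrt{\o/2K}$ is calibrated: a discarded part has $\E(P,A)\le|P|^{2}|A|<\tfrac{\o}{2K}|A|^{3}=\tfrac{\o}{2}\E(A)$, and, using $\E(\Omega\sqcup P,A)=\E(\Omega,A)+\E(P,A)+2\,\E(\Omega,P,A,A)$ and Cauchy--Schwarz, one tracks $\E(\Omega,A)$ through the discards, shedding small parts at each $(a)$-step in such a way that its running value never crosses the budget $\o\E(A)$. The main obstacle is precisely this bookkeeping: making the opposite-sign $(b)$-- and $(c)$--moves genuinely non-cancelling so that the step count stays polynomial in $K/\o\eps$, and arranging the discards so that the accumulated noise $\E(\Omega,A)$ stays below $\o\E(A)$. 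Once those two points are pinned down, the remaining estimates are routine, and collecting them gives the decomposition with properties $(i)$--$(iv)$.
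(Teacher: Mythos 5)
Your proposal is not a complete proof: it is an iterative split/merge/discard scheme whose two crucial points are exactly the ones you leave open, and both are genuine gaps rather than bookkeeping. First, termination. Your moves of type $(b)$ (split a part with large $\mathcal{E}_*$) and type $(c)$ (merge two communicating parts) push the index $\Phi$ and the number of parts in opposite directions, and you never exhibit a single potential that changes monotonically by a definite amount at every step; ``one runs a potential argument exploiting the sign asymmetry'' is precisely the missing argument, and it is not routine, since a merge can undo the gain of a previous split (this is why naive split-and-merge regularity arguments are usually reorganized). Second, the noise bound $(iv)$. Your calibration only shows that a \emph{single} discarded part $P$ has $\E(P,A)\le |P|^2|A|<\tfrac{\o}{2}\E(A)$; but $\E(\Omega,A)=\sum_{i,j}\E(P_i,P_j,A,A)$ contains the cross terms between distinct discarded parts, and the sum of the diagonal bounds alone is $|I|\cdot\tfrac{\o}{2}\E(A)$, which can exceed $\o\E(A)$ by a large factor. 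In your scheme nothing prevents the discarded parts from communicating with each other (or with $A$) — you discard as soon as a part gets small, before any non-communication property has been secured for it — so the claim that the running value of $\E(\Omega,A)$ ``never crosses the budget'' is unsupported.

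For comparison, the paper avoids both issues with a one-shot extremal choice: among the finitely many partitions $A=A_1\sqcup\dots\sqcup A_l$ it picks one minimizing
\[
\sum_{1\le i<j\le l}\bigl(\E(A_i,A_j,A,A)-\eps_1\,\mu(A_i)\mu(A_j)\E(A)\bigr),\qquad \eps_1=\o\eps/2 ,
\]
so existence is trivial and no termination argument is needed. Minimality applied to refining a part gives $\E(S,T,A,A)\ge\eps_1\mu(S)\mu(T)\E(A)$ for every bipartition $S\sqcup T=A_i$, whence by Cauchy--Schwarz $\E(S,T)\ge\eps_1^2|S|^2|T|^2/(K|A|)$, which yields $(ii)$ and, via Theorem \ref{t:main}, the doubling bound; minimality applied to merging two parts gives $\E(A_i,A_j,A,A)\le\eps_1\mu(A_i)\mu(A_j)\E(A)$ for \emph{all} pairs $i\neq j$, which gives $(iii)$ and — this is the point your scheme lacks — controls all cross terms when the small parts are collected into $\Omega$, so that $\E(\Omega,A)\le\sum_{i\in I}\E(A_i)+3\eps_1\E(A)\le\tfrac{\o}{2}\E(A)+3\eps_1\E(A)\le\o\E(A)$. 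Note also that the functional is built from energies $\E(\cdot,\cdot,A,A)$ relative to the ambient set $A$ with a penalty linear in $\mu(A_i)\mu(A_j)$, whereas your defects are measured inside each part; the ambient normalization is what makes both the Cauchy--Schwarz step for $(ii)$ and the noise computation for $(iv)$ go through cleanly. To salvage your approach you would have to construct an explicit terminating potential for the split/merge dynamics and prove a non-communication property for discarded parts before (or at the moment of) discarding; as written, the proof is incomplete at its two central steps.
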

\begin{proof}
    In the proof we follow the argument from \cite{ER_connectedness}. 
    Put $\eps_1= \o \eps/2$. 
    For $B\subseteq A$ write $\mu (B) = |B|/|A|$.
    Consider all partitions of $A$ into disjoint sets $A_1,\dots, A_l$, $l\ge 1$ and choose 
    one such that the sum 
\[
    \sum_{1\le i<j\le l} (\E(A_i,A_j, A,A) - \eps_1 \mu (A_i) \mu (A_j) \E(A))
\]
    is minimal. 
    If the minimal value is attained at several partitions, take any of them. 
    Clearly, for an arbitrary $i\in [l]$ and any $S,T$ such that $S\bigsqcup T = A_i$ one has 
    $\E(S,T, A,A) \ge \eps_1 \mu (S) \mu (T) \E(A)$ (otherwise we have a  contradiction with the minimality).
    Thus by the Cauchy--Schwarz inequality 
    the following holds 
\[
    \E(S,T) \ge \eps^2_1 \frac{|S|^2 |T|^2 \E(A)}{|A|^4} = 
    \eps^2_1 \frac{|S|^2 |T|^2 }{K|A|} 
\]  
    and therefore  $\mathcal{E}_* [A_i] \le 4 K (\o \eps)^{-2}$ for all $i\in [l]$. 
    Then Theorem \ref{t:main} gives us the required upper bound for $\mathcal{D}[A_i]$. 
    Using the minimality again, we see that for all $i,j\in [l]$, $i\neq j$ the following holds
\begin{equation}\label{f:DN_communicate}
    \E(A_i,A_j, A,A) \le \eps_1 \mu (A_i) \mu (A_j) \E(A) \,.
\end{equation}
    Putting $I:= \{ i ~:~ |A_i| \le \sqrt{\o/2K} \cdot  |A| \}$ and $\Omega = \bigsqcup_{i\in I} A_i$, we obtain 
\[
    \E(A,\Omega) = \sum_{i,j \in [l]} \E(A_i,A_j, \Omega,\Omega)
    \le 
    \sum_{m \in [l]} \E(A_m,\Omega) + \eps_1 \E(A) \sum_{i,j \in [l]} \mu (A_i) \mu (A_j) 
\]
\[
    \le 
    \sum_{m \in [l]}  \sum_{i \in I} \E(A_m,A_m, A_{i}, A)
    + \eps_1 \E(A) 
    \le 
    \sum_{i\in I} \E(A_i) + 3 \eps_1 \E(A) 
\]
\[
    \le 
    2^{-1} \o |A|^3/K + 3 \eps_1 \E(A)
    \le 
    \o \E(A) 
\]
and thus we have obtained (iv). 
    Finally, in view of \eqref{f:DN_communicate} for any $i,j\notin I$, $i\neq j$ one has 
\[
    \E(A_i,A_j) \le \E(A_i,A_j,A,A) \le \eps_1 \mu (A_i) \mu (A_j) \E(A)
    \le 
    \eps \frac{|A_i|^2 |A_j|^2}{|A|} \,.
\]
This completes the proof.
$\hfill\Box$
\end{proof}

\bp

Our dependence on the parameters is 
better than in \cite[Proposition 3.2]{GS_max_AP3} and 
in \cite[Theorems 4.1, 4.4, 4.6]{ER_connectedness}. 
Although  the proof of Theorem \ref{t:str} follows  the method from \cite{ER_connectedness}
the advantage is that we use the more subtle quantity $\mathcal{E}_* [A]$ in our application of Theorem \ref{t:main}. 
Finally, we note that sometimes in the formulation of Theorem \ref{t:str} there is an additional parameter $L$, but this twist is completely unimportant and is a consequence of the result given above (for more details see \cite{GS_max_AP3}).


\bibliographystyle{abbrv}

\bibliography{bibliography}{}

\begin{thebibliography}{10}

\bibitem{ARL1}
J.~Aaronson.
\newblock {Maximising the number of solutions to a linear equation in a set of integers}.
\newblock {\em Bulletin of the London Mathematical Society}, 51(4):577--594, 2019.

\bibitem{BSz_statistical}
A.~Balog and E.~Szemer{\'e}di.
\newblock {A statistical theorem of set addition}.
\newblock {\em Combinatorica}, 14(3):263--268, 1994.

\bibitem{ARL2}
B.~B. Bhattacharya, S.~Ganguly, X.~Shao, and Y.~Zhao.
\newblock {Upper tail large deviations for arithmetic progressions in a random set}.
\newblock {\em International Mathematics Research Notices}, 2020(1):167--213, 2020.

\bibitem{ER_connectedness}
G.~Elekes and I.~Z. Ruzsa.
\newblock {The structure of sets with few sums along a graph}.
\newblock {\em Journal of Combinatorial Theory, Series A}, 113(7):1476--1500, 2006.

\bibitem{Freiman_book}
G.~A. Freiman.
\newblock {Foundations of a structual theory of set addition}.
\newblock {\em Translation of Math. Monographs}, 37, 1973.

\bibitem{GGMT_Marton_bounded}
W.~Gowers, B.~Green, F.~Manners, and T.~Tao.
\newblock {Marton's Conjecture in abelian groups with bounded torsion}.
\newblock {\em arXiv preprint arXiv:2404.02244}, 2024.

\bibitem{Gowers_4}
W.~T. Gowers.
\newblock A new proof of {S}zemer\'{e}di's theorem for arithmetic progressions of length four.
\newblock {\em Geom. Funct. Anal.}, 8(3):529--551, 1998.

\bibitem{GS_max_AP3}
B.~Green and O.~Sisask.
\newblock {On the maximal number of 3-term arithmetic progressions in subsets of $\mathbf{Z}/p\mathbf{Z}$}.
\newblock {\em Bulletin of the London Mathematical Society}, 40(6):945--955, 2008.

\bibitem{Ruzsa_Plun}
I.~Z. Ruzsa.
\newblock {An application of graph theory to additive number theory}.
\newblock {\em Scientia, Ser. A}, 3(97-109):9, 1989.

\bibitem{Schoen_BSzG}
T.~Schoen.
\newblock {New bounds in Balog-Szemer{\'e}di-Gowers theorem}.
\newblock {\em Combinatorica}, 35(6):695--701, 2015.

\bibitem{ARL3}
X.~Shao.
\newblock {Large values of the additive energy in $\mathbf{R}^d$ and $\mathbf{Z}^d$}.
\newblock In {\em Mathematical Proceedings of the Cambridge Philosophical Society}, volume 156, pages 327--341. Cambridge University Press, 2014.

\bibitem{sh_common_energy}
I.~D. Shkredov.
\newblock {On common energies and sumsets}.
\newblock {\em arXiv preprint arXiv:2408.08113}, 2024.

\bibitem{TV}
T.~Tao and V.~Vu.
\newblock {\em Additive combinatorics}, volume 105 of {\em Cambridge Studies in Advanced Mathematics}.
\newblock Cambridge University Press, Cambridge, 2006.

\end{thebibliography}

\end{document}